\documentclass[11pt,a4paper]{amsart}
\usepackage[T1]{fontenc}
\usepackage{lmodern}
\usepackage[a4paper,margin=28mm]{geometry}
\usepackage{amssymb}
\usepackage[colorlinks=true,linkcolor=blue,citecolor=blue,urlcolor=blue]{hyperref}

\newtheorem{maintheorem}{Theorem}

\newtheorem{theorem}{Theorem}[section]
\newtheorem{proposition}[theorem]{Proposition}
\newtheorem{corollary}[theorem]{Corollary}
\newtheorem{problem}[theorem]{Problem}
\theoremstyle{remark}
\newtheorem{remark}[theorem]{Remark}
\newcommand{\N}{\mathbb N}
\newcommand{\K}{\mathbb K}
\newcommand{\cfrak}{\mathfrak c}
\DeclareMathOperator{\dens}{dens}
\DeclareMathOperator{\dist}{dist}

\title[Strongly normal subsequences]{Normal sequences without strongly normal subsequences}
\author{Jerzy K\k{a}kol}
\address{Faculty of Mathematics and Computer Science, Adam Mickiewicz
University, Uniwersytetu Pozna\'nskiego 4, 61-614 Pozna\'n, Poland}
\email{kakol@amu.edu.pl}
\date{}
\subjclass[2020]{Primary 46B26; Secondary 46B10, 03E17, 03E35}
\keywords{Normal sequence, strongly normal sequence, weak-star convergence,
separable quotient, bounding number}
\hypersetup{pdftitle={Normal sequences without strongly normal subsequences},
pdfauthor={Jerzy K\k{a}kol}}

\begin{document}

\begin{abstract}
A sequence $(y_n^*)$ in the unit sphere of $E^*$ is strongly normal if
the vectors $x\in E$ satisfying $\sum_n|y_n^*(x)|<\infty$ form a dense
subspace. For every set $\Gamma$ of cardinality at least the continuum,
we construct a normalized weakly null sequence in $\ell_1(\Gamma)^*$
with no strongly normal subsequence. This answers a question of \'Sliwa
negatively in ZFC. Together with the classical selection argument below
the bounding number $\mathfrak b$, the construction gives positive and
negative bounds for this subsequence property on $\ell_1(\kappa)$.
Its validity on $\ell_1(\omega_1)$ is independent of ZFC. The intermediate
range $\mathfrak b\leqslant\kappa<\mathfrak c$ remains undecided by
these results. We also give a counterexample in a Banach sequence space
that is not isomorphic to any $\ell_1(\Gamma)$.
\end{abstract}

\maketitle

\section{Introduction}

The separable quotient problem asks whether every infinite-dimensional
Banach space has an infinite-dimensional separable quotient. We refer
to Mujica \cite{Mujica} and Ferrando, K\k{a}kol, L\'opez-Pellicer, and
\'Sliwa \cite{FKLS} for surveys. Among the general positive results,
Argyros, Dodos, and Kanellopoulos \cite[Theorem~15]{ADK} proved that every
infinite-dimensional Banach space isomorphic to a dual has such a quotient.
The connection between separable quotients and cardinal characteristics
is discussed in Brech's survey \cite{Brech}.

Throughout, Banach spaces are over $\K\in\{\mathbb R,\mathbb C\}$, and
$S(E^*)$ denotes the unit sphere of $E^*$. A sequence
$y^*=(y_n^*)\subset S(E^*)$ is \emph{normal} if it is weak$^*$-null,
and it is \emph{strongly normal} if
\[
 D(y^*)=\left\{x\in E:\sum_{n=1}^{\infty}|y_n^*(x)|<\infty\right\}
\]
is norm dense in $E$. This is a linear subspace. Strong normality implies
normality: for $d\in D(y^*)$ and $x\in E$,
$\limsup_n|y_n^*(x)|\leqslant\|x-d\|$, because $\|y_n^*\|=1$.
Letting $d$ approximate $x$ proves weak$^*$ convergence to zero.

By the Josefson--Nissenzweig theorem \cite{Josefson,Nissenzweig}, every
infinite-dimensional Banach space has a normal sequence in its dual.
\'Sliwa \cite[Theorem~3]{Sliwa} proved that $E$ has an
infinite-dimensional separable quotient if and only if $E^*$ contains
a strongly normal sequence, and asked whether every normal sequence
has a strongly normal subsequence \cite[p.~388]{Sliwa}; see also
\cite[Problem~7]{FKLS}. A positive answer would settle the separable
quotient problem: first choose a normal sequence by the
Josefson--Nissenzweig theorem, then apply \'Sliwa's characterization to
a strongly normal subsequence. The subsequence assertion holds for
every weakly compactly generated Banach space
\cite[Proposition~4]{Sliwa}. More recently, particular sequences of
finitely supported measures on products of compact spaces were shown
to have strongly normal subsequences in \cite[Theorem~2(5)]{KLS}.

We give a negative answer to the unrestricted subsequence question.
Write $\cfrak=2^{\aleph_0}$ and $[\N]^\infty$ for the family of all
infinite subsets of $\N$.

\begin{maintheorem}\label{thm:large}
For every set $\Gamma$ with $|\Gamma|\geqslant\cfrak$, the dual
$\ell_1(\Gamma)^*$ contains a normalized weakly null sequence with no
strongly normal subsequence. Consequently, \'Sliwa's subsequence
question has a negative answer in ZFC, even for weakly null sequences.
\end{maintheorem}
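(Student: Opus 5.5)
The plan is to realize $\ell_1(\Gamma)^*$ as $\ell_\infty(\Gamma)$. Since $|\Gamma|\geqslant\cfrak$ and $|[\N]^\infty|=\cfrak$, we may assume $\Gamma\supseteq\N\sqcup[\N]^\infty$; the remaining coordinates of $\Gamma$ are given value $0$. The coordinates indexed by $[\N]^\infty$ are meant to \emph{diagonalize against every possible subsequence}: for each $M=\{m_1<m_2<\dots\}\in[\N]^\infty$ there will be coordinates on which the restriction $(y_{m_k}^*)_k$ is only slowly decaying. The sequence will have the form
\[
 y_n^*=e_n^*+z_n^*,\qquad z_n^*(n')=0 \text{ for } n'\in\N,
\]
where $e_n^*$ is the indicator of the point $n\in\N\subset\Gamma$. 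The $e_n^*$ are disjointly supported and give $\|y_n^*\|=1$ once $\|z_n^*\|\leqslant1$.

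\textbf{Choice of $z_n^*$.} For each $M$ I would use a block decomposition of $M$ into consecutive finite blocks $B_1^M<B_2^M<\dots$ with $|B_j^M|\to\infty$. I would assign to $M$ a finite family of coordinates (or a single coordinate $\gamma_M$) on which $z_{m}^*$ takes values of size $\varepsilon_j$ for $m\in B_j^M$. The parameters are chosen so that $\varepsilon_j\to0$ while $\sum_j\varepsilon_j|B_j^M|=\infty$, for instance $\varepsilon_j=1/j$ and $|B_j^M|=j$. Signs are arranged, say Rademacher-type signs within each block, so that no cancellation across blocks occurs on the test vectors below.

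\textbf{Weak nullness.} Weak nullness in $\ell_\infty(\Gamma)=C(\beta\Gamma)$ means pointwise convergence to $0$ on $\beta\Gamma$. The disjoint part $(e_n^*)$ is weakly null, being equivalent to the $c_0$ basis. For $(z_n^*)$ I would aim to prove
\[
 \sup_{\gamma}\ \#\{n: |z_n^*(\gamma)|\geqslant\varepsilon\}<\infty \quad\text{for every } \varepsilon>0.
\]
Together with boundedness, this shows that for any ultrafilter $U$ on $\Gamma$ and any $\varepsilon>0$ only finitely many $n$ have $|\lim_U z_n^*|\geqslant\varepsilon$, which gives weak nullness. This forces the block structure with $\varepsilon_j$ decreasing. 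It is also why a naive ``$z_{m_k}^*(\gamma_M)=1/k$'' fails: the value $1$ at the first element of $M$ must not recur uniformly in $n$. I would therefore let the block indices start at a level depending on $\min M$, or put values $\varepsilon_j$ only for $j\geqslant j_0$ with $j_0$ growing along a countable scale attached to $M$.

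\textbf{Non-density.} Fix a subsequence $M$. I would exhibit a norm-one $\varphi\in\ell_1(\Gamma)^*$ that annihilates $D((y_m^*)_{m\in M})$ but is not zero. The natural candidate is $\varphi=e_{\gamma_M}^*$, possibly corrected on the $\N$-coordinates. The point to verify is that any $x\in\ell_1(\Gamma)$ with $x(\gamma_M)\neq0$ forces
\[
 \sum_{m\in M}|y_m^*(x)|\geqslant\sum_j\bigl(\varepsilon_j|B_j^M|\,|x(\gamma_M)|-\text{error}_j\bigr)=\infty .
\]
Here the error comes from the other coordinates of $x$, whose total mass is finite. Bounding these errors summably is exactly where the design must ensure that every other coordinate $\gamma\neq\gamma_M$ meets $M$ in a set with $\sum_{m\in M}|z_m^*(\gamma)|$ finite, or at least small relative to $\gamma_M$.

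\textbf{Main obstacle.} I expect the hardest step to be the tension between these two requirements. Weak nullness wants the coordinate functions $n\mapsto z_n^*(\gamma)$ uniformly small in a strong sense. The diagonalization wants, for every one of the $\cfrak$ subsequences, non-summability on a coordinate that is almost orthogonal to all other coordinates along $M$. To get this almost-orthogonality I would first try an almost disjoint family: replace the index $M$ by a branch of the binary tree or a member of an almost disjoint family inside $M$. Distinct coordinates then overlap along $M$ in only finitely many blocks, so the cross terms are finite sums and the error estimate above goes through.
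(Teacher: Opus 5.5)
Your proposal has a genuine gap: non-density is never proved, and the route you outline for it cannot work. You want the coordinate functional $x\mapsto x(\gamma_M)$ to annihilate $D_M$, with the cross-term errors made summable.

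\textbf{Why the coordinate-functional route fails.} For the unsigned version of your block design (the signs are never specified), the claim is false. Take $k\notin M$ with $k>\min M$. Apart from the index $k$, the profiles $n\mapsto z_n^*(\gamma_M)$ and $n\mapsto z_n^*(\gamma_{M\cup\{k\}})$ differ at only one element per block: the one whose block index increases when ranks shift by one. There they differ by $\varepsilon_j-\varepsilon_{j+1}=1/(j(j+1))$. Hence $x=e_{\gamma_M}-e_{\gamma_{M\cup\{k\}}}$ has $\sum_n|y_n^*(x)|<\infty$, so $x\in D_M$, while $x(\gamma_M)=1$.

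Asking that the other coordinates be summable, or negligible, along $M$ is impossible. For every infinite $M'\subseteq M$, the coordinate $\gamma_{M'}$ is already non-summable along $M$.

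The almost-disjoint repair cannot be made global either. No almost disjoint family $\mathcal A\subseteq[\N]^\infty$ has a member almost contained in every infinite set. To see this, split $A\in\mathcal A$ into two infinite halves. A member almost contained in one half meets $A$ infinitely, hence equals $A$, which is not almost contained in that half.

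\textbf{The naive profile is not ruled out.} You claim that $z_{m_k}^*(\gamma_M)=1/k$ fails weak nullness; this is wrong. It satisfies your own criterion: at each fixed coordinate, at most $1/\varepsilon$ indices $n$ have value $\geqslant\varepsilon$. The fact that $\|z_n^*\|=1$ for every $n$ is no obstruction. That naive sequence is exactly the one that works.

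\textbf{The missing idea.} Control all cross terms at once through the norm of partial sums, not through a coordinate functional. Use a positive profile that is nonincreasing in rank, such as $y_{a_j}^*(A)=1/j$. Set $S_m^*=\sum_{j\le m}y_{b_j}^*$ and $H_m=\sum_{j\le m}1/j$. The points of $A\cap\{b_1,\dots,b_m\}$ have ranks $r_1<\dots<r_q$ in $A$ with $r_i\geqslant i$. Therefore $0\leqslant S_m^*(e_A)\leqslant H_m=S_m^*(e_B)$, so $\|S_m^*\|=H_m\to\infty$, attained at $e_B$. It follows that $\sum_{j\le m}|y_{b_j}^*(x)|\geqslant|S_m^*(x)|\geqslant H_m(1-\|x-e_B\|)$. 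So the open unit ball about $e_B$ misses $D_B$.

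In this argument the cross terms are not summable. They are merely at most the fraction $\|x-e_B\|<1$ of the main term. The annihilating functional is a weak$^*$ cluster point of $S_m^*/H_m$, not $e_B^*$.

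The same rank inequality would rescue your block design once you drop the signs and the $\min M$-dependent starting level. Your weak-nullness criterion via $\beta\Gamma$ is correct and is a valid substitute for the $\ell_2$-factorization argument.
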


For an infinite cardinal $\kappa$, let $\mathsf P_\kappa$ denote the
assertion that every normal sequence in $\ell_1(\kappa)^*$ has a
strongly normal subsequence. Combining Theorem~\ref{thm:large} with
the classical selection argument below the bounding number
$\mathfrak b$ gives the following bounds.

\begin{maintheorem}\label{thm:bounds}
For every infinite cardinal $\kappa$,
\[
 \kappa<\mathfrak b\ \Longrightarrow\ \mathsf P_\kappa,
 \qquad
 \kappa\geqslant\cfrak\ \Longrightarrow\ \neg\mathsf P_\kappa.
\]
\end{maintheorem}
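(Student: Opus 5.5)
The second implication is immediate from Theorem~\ref{thm:large}. If $\kappa\geqslant\cfrak$, take $\Gamma=\kappa$. Theorem~\ref{thm:large} gives a normalized weakly null sequence in $\ell_1(\kappa)^*$, and weakly null implies weak$^*$-null, so the sequence is normal. It has no strongly normal subsequence, hence $\neg\mathsf P_\kappa$. All the work is therefore in the first implication, the selection argument below $\mathfrak b$.

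For $\kappa<\mathfrak b$, let $(y_n^*)$ be normal in $\ell_\infty(\kappa)=\ell_1(\kappa)^*$. The finitely supported vectors are dense in $\ell_1(\kappa)$, and $D$ of any subsequence is a linear subspace. So it suffices to find an infinite $M=\{m_1<m_2<\cdots\}\subset\N$ with
\[
 \sum_k|y_{m_k}^*(e_\gamma)|<\infty\quad\text{for every }\gamma<\kappa .
\]
Weak$^*$-nullness gives $y_n^*(e_\gamma)\to0$ for each $\gamma$. For each $\gamma$ define $f_\gamma\in\N^\N$ by letting $f_\gamma(k)$ be the least $N$ such that $|y_n^*(e_\gamma)|<2^{-k}$ for all $n\geqslant N$. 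Since $\kappa<\mathfrak b$, the family $\{f_\gamma:\gamma<\kappa\}$ is bounded mod finite by a single increasing $g\in\N^\N$: for each $\gamma$ there is $k_\gamma$ with $f_\gamma(k)\leqslant g(k)$ for all $k\geqslant k_\gamma$.

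Choose $m_k=g(k)$ (after making $g$ strictly increasing). For $k\geqslant k_\gamma$ we get $m_k\geqslant f_\gamma(k)$, so $|y_{m_k}^*(e_\gamma)|<2^{-k}$. Hence $\sum_k|y^*_{m_k}(e_\gamma)|\leqslant k_\gamma+1<\infty$, using $\|y_n^*\|=1$ to bound the finitely many initial terms. This shows $e_\gamma\in D((y^*_{m_k}))$ for all $\gamma$, so $D$ contains the dense span of $\{e_\gamma\}$. The subsequence is strongly normal, proving $\mathsf P_\kappa$.

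The only delicate point is making sure the rate functions $f_\gamma$ are well defined, which is where weak$^*$-nullness is used coordinatewise. One should also note that the bound is needed for all $\kappa$ coordinates simultaneously, which is exactly the hypothesis $\kappa<\mathfrak b$. For $\kappa$ countable this still works since $\aleph_0<\mathfrak b$. No further obstacle is expected, because everything beyond these two points is routine.
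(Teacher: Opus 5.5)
Your proposal is correct and follows the paper's proof. The negative bound is read off from Theorem~\ref{thm:large} with $\Gamma=\kappa$, and the positive bound is the same diagonalization below $\mathfrak b$ as in Proposition~\ref{prop:small}. The only cosmetic difference is that you run the diagonalization over the $\kappa$ unit vectors $e_\gamma$ and then use linearity of the summability domain, whereas the paper runs it over a norm-dense set of size $\dens\ell_1(\kappa)=\kappa$.
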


Since $\aleph_1\leqslant\mathfrak b\leqslant\cfrak$, these bounds
are compatible. They yield independence for $\mathsf P_{\aleph_1}$,
but do not decide the case $\mathfrak b=\aleph_1<\cfrak$. More
generally, the range $\mathfrak b\leqslant\kappa<\cfrak$ is not
settled by our arguments. Section~\ref{sec:density} states the resulting
questions and an obstruction to reducing the index family in the
construction. Section~\ref{sec:sequence-space} gives a second example.

\section{The construction}

For a sequence $y^*=(y_n^*)\subset E^*$ and
$B=\{b_1<b_2<\cdots\}\in[\N]^\infty$, write
\[
 D_B(y^*)=\left\{x\in E:\sum_{j=1}^{\infty}|y_{b_j}^*(x)|<\infty\right\}.
\]
We first give the construction on $\ell_1([\N]^\infty)$, where
the index $B$ itself supplies a vector witnessing the failure of density.

\begin{proposition}\label{prop:construction}
Let $\Gamma_0=[\N]^\infty$ and $E_0=\ell_1(\Gamma_0)$. There is a
normalized weakly null sequence $y^*=(y_n^*)\subset E_0^*$ such that
\[
 \dist(e_B,D_B(y^*))=1\qquad(B\in\Gamma_0),
\]
where $e_B$ is the canonical unit vector indexed by $B$.
\end{proposition}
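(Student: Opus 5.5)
The plan is to define the functionals explicitly on the unit vectors, using weights that decay harmonically along each index set. Identify $E_0^*=\ell_\infty(\Gamma_0)$. For $A=\{a_1<a_2<\cdots\}\in\Gamma_0$ and $n\in\N$ put
\[
 y_n^*(e_A)=\begin{cases}1/j,& n=a_j,\\ 0,& n\notin A.\end{cases}
\]
Every value lies in $[0,1]$. The value $1$ is attained at $A=\{n,n+1,\dots\}$, so $\|y_n^*\|=1$.

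\emph{Weak nullness.} Since $\ell_\infty(\Gamma_0)\cong C(\beta\Gamma_0)$ and the sequence is bounded, it suffices to show $y_n^*(p)\to0$ for every $p\in\beta\Gamma_0$, where $y_n^*(p)$ denotes the continuous extension of $A\mapsto y_n^*(e_A)$. For each fixed $A$,
\[
 \sum_n y_n^*(e_A)^2=\sum_j j^{-2}\leqslant\pi^2/6 .
\]
Each finite sum $\sum_{n\leqslant N}(y_n^*)^2$ is continuous on $\beta\Gamma_0$. Because $\Gamma_0$ is dense in $\beta\Gamma_0$, this uniform bound passes to every point $p$. Hence $\sum_n y_n^*(p)^2<\infty$, so $y_n^*(p)\to0$, and Lebesgue's dominated convergence theorem gives weak nullness.

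\emph{Distance.} The inequality $\dist(e_B,D_B(y^*))\leqslant1$ is trivial, since $0\in D_B(y^*)$. For the reverse inequality, let $x=\sum_A x_Ae_A$ with $\|x-e_B\|<1$. Write $r=\sum_{A\neq B}|x_A|$. From $|1-x_B|+r<1$ we get $r<1-|1-x_B|\leqslant|x_B|$, so $\delta:=|x_B|-r>0$. The key combinatorial point is a domination estimate: for every $A$ and every $N$,
\[
 \sum_{j\leqslant N}y_{b_j}^*(e_A)\leqslant H_N:=\sum_{k\leqslant N}1/k .
\]
The reason is that the nonzero terms are $1/k$, where $k$ runs over the positions in $A$ of those $b_j\in A$. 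These positions are distinct positive integers and there are at most $N$ of them, so their reciprocals sum to at most $H_N$. For $A=B$ the sum equals $H_N$ exactly. Therefore
\[
 \sum_{j\leqslant N}|y_{b_j}^*(x)|\geqslant|x_B|H_N-\sum_{A\neq B}|x_A|\,H_N=\delta H_N\to\infty ,
\]
so $x\notin D_B(y^*)$, and the distance is $1$.

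The main obstacle I anticipate is this tension. Weak nullness forces $y_{b_j}^*(e_B)\to0$, whereas the distance estimate needs the contributions of all the other $e_A$ along the subsequence $B$ to be controlled by those of $e_B$. The harmonic weights indexed by position inside $A$ reconcile the two requirements through the distinct-positions estimate. The step I would double-check most carefully is the passage from the $\ell_2$ bound on $\Gamma_0$ to the points of $\beta\Gamma_0$, which is what gives genuine weak (not merely weak$^*$) nullness.
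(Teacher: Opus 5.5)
Your proof is correct. For the functionals and the distance estimate it coincides with the paper's. Your domination bound $\sum_{j\leqslant N}y_{b_j}^*(e_A)\leqslant H_N$, with equality at $A=B$, is exactly the paper's identity $\|S_N^*\|=S_N^*(e_B)=H_N$. Splitting $x$ into its $e_B$-coordinate and the rest gives the lower bound $(|x_B|-r)H_N$, which is at least the paper's $H_N(1-\|x-e_B\|)$ and serves the same purpose.

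The genuine difference is in weak nullness. The paper factors the sequence through $\ell_2$. The operator $U\colon\ell_2\to\ell_\infty(\Gamma_0)$, $(Uz)(A)=\sum_j z_{a_j}/j$, is bounded by Cauchy--Schwarz and satisfies $Uu_n=y_n^*$. Weak nullness is then inherited from the unit vector basis of $\ell_2$ by weak-to-weak continuity. You instead pass to $C(\beta\Gamma_0)$ and use the criterion that a bounded, pointwise null sequence in $C(K)$ is weakly null, which rests on the Riesz representation theorem and dominated convergence.

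The step you flagged is sound. The extension $\ell_\infty(\Gamma_0)\to C(\beta\Gamma_0)$ is an isometric algebra isomorphism, so it commutes with squaring and finite sums. The bound $\pi^2/6$ therefore passes from the dense set $\Gamma_0$ to every point of $\beta\Gamma_0$.

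Both arguments use the same $\ell_2$ estimate. Your bound $\sum_n y_n^*(p)^2\leqslant\pi^2/6$ says exactly that $U^*$ sends the evaluation functional at $p$ to the vector $(y_n^*(p))_n$, of $\ell_2$-norm at most $\pi/\sqrt6$. The paper's route is shorter and needs neither the Stone--\v{C}ech compactification nor measure theory. Yours makes visible that the $\ell_2$ control persists on the remainder $\beta\Gamma_0\setminus\Gamma_0$, and that is precisely what separates weak from weak$^*$ nullness for this bounded sequence.
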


\begin{proof}
For $f\in\ell_\infty(\Gamma_0)$, set
$f(x)=\sum_{A\in\Gamma_0}x(A)f(A)$.
This identifies $E_0^*$ with $\ell_\infty(\Gamma_0)$, with
$\|f\|_{E_0^*}=\sup_{A\in\Gamma_0}|f(A)|$. For
$A=\{a_1<a_2<\cdots\}\in\Gamma_0$, define
\[
 y_n^*(A)=
 \begin{cases}
  1/j,&n=a_j,\\
  0,&n\notin A.
 \end{cases}
\]
These bounded coefficient functions define elements of $E_0^*$. Their
values lie in $[0,1]$, and $y_n^*(A)=1$ whenever $\min A=n$.
The supremum norm formula therefore gives $\|y_n^*\|=1$. Over
$\mathbb C$, the same real coefficients define complex-linear
functionals by this pairing.

In fact, the sequence is weakly null, so the example remains valid when
weak$^*$ convergence in the definition of normality is strengthened
to weak convergence. Define $U:\ell_2\to\ell_\infty(\Gamma_0)$ by
\[
 (Uz)(A)=\sum_{j=1}^{\infty}\frac{z_{a_j}}j.
\]
The Cauchy--Schwarz inequality gives absolute convergence and
\[
 |(Uz)(A)|
 \leqslant\left(\sum_{j=1}^{\infty}|z_{a_j}|^2\right)^{1/2}
            \left(\sum_{j=1}^{\infty}\frac1{j^2}\right)^{1/2}
 \leqslant\frac\pi{\sqrt6}\|z\|_2.
\]
Thus $U$ is bounded and linear. If $(u_n)$ is the canonical basis of
$\ell_2$, then $(Uu_n)(A)$ equals $1/j$ when $n=a_j$ and equals
zero otherwise. Hence $Uu_n=y_n^*$. Bounded linear operators are
weak-to-weak continuous, so weak nullity of $(u_n)$ proves the claim.

Let us fix $B=\{b_1<b_2<\cdots\}\in\Gamma_0$ and set
\[
 H_m=\sum_{j=1}^m\frac1j,
 \qquad S_m^*=\sum_{j=1}^m y_{b_j}^*.
\]
We have
\begin{equation}\label{eq:partial-sums}
 \|S_m^*\|=S_m^*(e_B)=H_m.
\end{equation}
Indeed, for $A=\{a_1<a_2<\cdots\}\in\Gamma_0$, write
\[
 A\cap\{b_1,\ldots,b_m\}=\{a_{r_1},\ldots,a_{r_q}\},
 \qquad r_1<\cdots<r_q.
\]
There are at most $m$ points in this intersection, so $q\leqslant m$.
The ranks are increasing positive integers, and therefore
$r_j\geqslant j$. Consequently,
\[
 0\leqslant S_m^*(e_A)
 =\sum_{j=1}^q\frac1{r_j}
 \leqslant\sum_{j=1}^q\frac1j
 \leqslant H_m,
\]
with the usual convention for an empty sum. Since $B\in\Gamma_0$,
we may take $A=B$, when equality holds. This proves
\eqref{eq:partial-sums}.

It follows that, for every $x\in E_0$,
\begin{align*}
 \sum_{j=1}^m|y_{b_j}^*(x)|
 &\geqslant |S_m^*(x)|\\
 &\geqslant S_m^*(e_B)-|S_m^*(x-e_B)|\\
 &\geqslant H_m\bigl(1-\|x-e_B\|\bigr).
\end{align*}
As $H_m\to\infty$, every $x$ in the open unit ball centered at $e_B$
lies outside $D_B(y^*)$. Hence the distance is at least one. It is
exactly one, the largest possible value, because $0\in D_B(y^*)$
and $\|e_B\|=1$.
\end{proof}

\begin{proof}[Proof of Theorem~\ref{thm:large}]
Since $|\Gamma_0|=\cfrak$, choose an injection
$\iota:\Gamma_0\to\Gamma$. Extend each coefficient function from
Proposition~\ref{prop:construction} by zero outside $\iota(\Gamma_0)$.
Extension by zero is a linear isometry
$\ell_\infty(\Gamma_0)\to\ell_\infty(\Gamma)$, so the resulting
sequence is normalized and weakly null. For each $B\in\Gamma_0$,
\eqref{eq:partial-sums} still holds with $e_{\iota(B)}$ in place of
$e_B$. The same estimate gives
$\dist(e_{\iota(B)},D_B(y^*))=1$ in $\ell_1(\Gamma)$.
No subsequence is strongly normal.
\end{proof}

\begin{remark}\label{rem:domains}
The failure of density does not mean that $D_B(y^*)$ is small.
For every infinite $A\subseteq\N\setminus\{1\}$, put
\[
 v_A=\tfrac12(e_A-e_{A\cup\{1\}}).
\]
In the construction of Proposition~\ref{prop:construction},
\[
 \sum_{n=1}^{\infty}|y_n^*(v_A)|
 =\frac12+\frac12\sum_{j=1}^{\infty}
                  \left(\frac1j-\frac1{j+1}\right)=1.
\]
The vectors $v_A$ have norm one and disjoint pairs of coordinates.
Their closed linear span is therefore isometric to $\ell_1(\cfrak)$.
The triangle inequality and absolute summation show that this entire
closed span lies in $D_{\N}(y^*)$, hence in every $D_B(y^*)$.
In particular, every such domain is nonseparable. Also,
$e_A\in D_B(y^*)$ whenever $A\cap B$ is finite.
\end{remark}

\begin{remark}\label{rem:context}
For any infinite $\Gamma$, choose a countably infinite
$\Gamma_1\subseteq\Gamma$. The norm-one coordinate projection onto
$\ell_1(\Gamma_1)$ has separable infinite-dimensional range. Thus
$\ell_1(\Gamma)$ has a separable quotient and its dual contains
strongly normal sequences by \cite[Theorem~3]{Sliwa}. Theorem~\ref{thm:large}
separates this existence statement from extraction out of every normal
sequence. There is also no conflict with the positive result for WCG
spaces: an uncountable $\ell_1(\Gamma)$ is not WCG. Indeed, by the
Schur property and the Eberlein--\v{S}mulian theorem its weakly compact
subsets are norm compact, so any weakly compactly generated subspace
is separable.
\end{remark}

\section{Cardinal bounds and the remaining questions}\label{sec:density}

For $f,g\in\N^{\N}$, write $f\leqslant^*g$ if $f(k)\leqslant g(k)$
for all sufficiently large $k$. The bounding number $\mathfrak b$ is
the least size of a family in $\N^{\N}$ with no common eventual upper
bound. We use $\dens E$ for the least cardinality of a norm-dense
subset of $E$.

Saxon and S\'anchez Ruiz \cite[Theorem~3]{SSR} proved that every
infinite-dimensional Banach space of density less than $\mathfrak b$
has an infinite-dimensional separable quotient. We recall the standard
selection argument in the proof of \cite[Corollary~11]{FKLS}; see also
\cite[proof of Theorem~2.1]{Brech}. This known argument already starts
with an arbitrary given normal sequence.

\begin{proposition}\label{prop:small}
If $\dens E<\mathfrak b$, then every normal sequence in $E^*$ has a
strongly normal subsequence.
\end{proposition}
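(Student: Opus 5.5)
Let $\kappa=\dens E<\mathfrak b$ and fix a dense set $\{x_\alpha:\alpha<\kappa\}\subseteq E$; if $\kappa$ is finite we may take $\kappa=\omega$ (and then $\kappa<\mathfrak b$ still holds since $\aleph_1\leqslant\mathfrak b$). Let $y^*=(y_n^*)\subset S(E^*)$ be normal. The plan is to assign to each $x_\alpha$ a function $f_\alpha\in\N^\N$ recording how fast $y_n^*(x_\alpha)\to0$, dominate all of them by a single $g$ using $\kappa<\mathfrak b$, and read off from $g$ a subsequence along which every $x_\alpha$ has summable values.

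First, for each $\alpha$ define $f_\alpha(k)$ as the least $N$ such that $|y_n^*(x_\alpha)|\leqslant 2^{-k}$ for all $n\geqslant N$. This is finite because $y_n^*(x_\alpha)\to0$ by weak$^*$ nullity. Since the family $\{f_\alpha:\alpha<\kappa\}$ has size $<\mathfrak b$, there is $g\in\N^\N$ with $f_\alpha\leqslant^*g$ for every $\alpha$. Replacing $g$ by a larger strictly increasing function, we may assume $g(1)<g(2)<\cdots$. Put $b_k=g(k)$ and $B=\{b_1<b_2<\cdots\}$.

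Second, fix $\alpha$ and choose $k_0$ with $f_\alpha(k)\leqslant g(k)$ for $k\geqslant k_0$. For such $k$ we have $b_k=g(k)\geqslant f_\alpha(k)$, hence $|y_{b_k}^*(x_\alpha)|\leqslant2^{-k}$. The tail $\sum_{k\geqslant k_0}|y^*_{b_k}(x_\alpha)|$ is therefore bounded by a geometric series, and the finitely many initial terms are each at most $\|x_\alpha\|$. So $x_\alpha\in D_B(y^*)$ for every $\alpha$. Thus $D_B(y^*)$ contains a dense set, and the subsequence $(y^*_{b_k})$ is strongly normal. No linearity or closure argument is needed.

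The only genuinely nontrivial input is the domination step, which is exactly the definition of $\mathfrak b$: a family of fewer than $\mathfrak b$ functions has an eventual upper bound. The remaining point to check is the passage to a strictly increasing dominating $g$, for instance $g'(k)=k+\max_{i\leqslant k}g(i)$. I expect no real obstacle beyond arranging that one subsequence works simultaneously for all dense points, and the diagonal choice $b_k=g(k)$ does exactly that.
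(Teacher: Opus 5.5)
Your proof is correct and is essentially the paper's argument: you assign to each point of a dense set of size $<\mathfrak b$ a rate function, dominate all of them by a single $g$, and pass to a strictly increasing sequence lying above $g$. The paper instead chooses $n_k\geqslant g(k)$ recursively where you make $g$ strictly increasing, which amounts to the same thing, and your remark that $D_B(y^*)$ is dense simply because it contains a dense set, without using linearity, is also right.
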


\begin{proof}
Let $(y_n^*)$ be normal. By the definition of density, choose a
norm-dense set $D\subset E$ with $|D|<\mathfrak b$. For each $x\in D$,
choose $f_x\in\N^{\N}$ such that
\[
 n\geqslant f_x(k)\quad\Longrightarrow\quad
 |y_n^*(x)|\leqslant2^{-k}.
\]
There is $g\in\N^{\N}$ with $f_x\leqslant^*g$ for every $x\in D$.
Recursively choose a strictly increasing sequence $(n_k)$ with
$n_k\geqslant g(k)$. For each $x\in D$,
$|y_{n_k}^*(x)|\leqslant2^{-k}$ eventually, so
$\sum_k|y_{n_k}^*(x)|<\infty$. Thus the summability domain of
$(y_{n_k}^*)$ contains $D$. It is a linear subspace and is norm dense,
which is precisely strong normality.
\end{proof}

\begin{proof}[Proof of Theorem~\ref{thm:bounds}]
For infinite $\kappa$, $\dens\ell_1(\kappa)=\kappa$. Apply
Proposition~\ref{prop:small} when $\kappa<\mathfrak b$ and
Theorem~\ref{thm:large} when $\kappa\geqslant\cfrak$.
\end{proof}

\begin{corollary}\label{cor:independence}
CH implies $\neg\mathsf P_{\aleph_1}$, while
$\mathfrak b>\aleph_1$ implies $\mathsf P_{\aleph_1}$.
Consequently, assuming that ZFC is consistent, neither
$\mathsf P_{\aleph_1}$ nor its negation is provable in ZFC.
\end{corollary}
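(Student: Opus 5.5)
The plan is to read off both halves of Corollary~\ref{cor:independence} from Theorem~\ref{thm:bounds} by specializing to $\kappa=\aleph_1$, and then to supply the two consistency results that turn these implications into independence.

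For the first implication, CH says $\cfrak=\aleph_1$. Taking $\kappa=\aleph_1$ gives $\kappa\geqslant\cfrak$, and the second implication of Theorem~\ref{thm:bounds} yields $\neg\mathsf P_{\aleph_1}$. Equivalently, Theorem~\ref{thm:large} applies directly to $\Gamma=\omega_1$, since $|\omega_1|=\cfrak$. For the second implication, $\mathfrak b>\aleph_1$ means $\kappa=\aleph_1<\mathfrak b$, and the first implication of Theorem~\ref{thm:bounds} gives $\mathsf P_{\aleph_1}$. This step rests on Proposition~\ref{prop:small} together with $\dens\ell_1(\aleph_1)=\aleph_1$.

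For the independence statement, I will use the classical relative consistency results. Assume ZFC is consistent. Then ZFC$+$CH is consistent by Gödel's constructible universe $L$, and in that theory $\neg\mathsf P_{\aleph_1}$ is provable, so ZFC does not prove $\mathsf P_{\aleph_1}$. Likewise, ZFC$+\mathfrak b>\aleph_1$ is consistent. One way to see this is that Martin's Axiom with $\cfrak=\aleph_2$ is consistent by Solovay--Tennenbaum and implies $\mathfrak b=\cfrak$. Another is to add $\aleph_2$ Cohen reals; that alone is not enough, since Cohen forcing keeps $\mathfrak b=\aleph_1$, so one should instead iterate Hechler forcing with finite support $\omega_2$ times. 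In such a model $\mathsf P_{\aleph_1}$ holds, so ZFC does not prove $\neg\mathsf P_{\aleph_1}$.

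The only delicate point is citing the correct model for $\mathfrak b>\aleph_1$. I will use MA$+\neg$CH, which is standard and gives $\mathfrak b=\cfrak>\aleph_1$. Everything else is a direct specialization of results already established in the paper.
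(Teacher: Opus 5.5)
Your proposal is correct and matches the paper's proof. You specialize Theorem~\ref{thm:bounds} to $\kappa=\aleph_1$ under CH and under $\mathfrak b>\aleph_1$, and then obtain independence from the relative consistency of CH and of MA$+\neg$CH, using that MA gives $\mathfrak b=\cfrak$. Your alternative via a finite-support Hechler iteration is also valid but not needed.
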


\begin{proof}
Under CH, $\aleph_1=\cfrak$, so apply the negative bound in
Theorem~\ref{thm:bounds}. Its positive bound applies when
$\mathfrak b>\aleph_1$. Martin's axiom implies
$\mathfrak b=\cfrak$; together with the relative consistency of CH
and of $\mathrm{MA}+\neg\mathrm{CH}$, this gives the independence
conclusion; see \cite[Section~6]{Blass} for the cardinal inequalities
under Martin's axiom.
\end{proof}

Corollary~\ref{cor:independence} does not decide
$\mathsf P_{\aleph_1}$ when $\mathfrak b=\aleph_1<\cfrak$.
The contrapositive of Proposition~\ref{prop:small} gives
$\neg\mathsf P_{\aleph_1}\Rightarrow\mathfrak b=\aleph_1$;
the converse is not established here.

\begin{problem}\label{prob:omegaone}
Does $\mathfrak b=\aleph_1$ imply $\neg\mathsf P_{\aleph_1}$?
Equivalently, is $\mathsf P_{\aleph_1}$ equivalent to
$\mathfrak b>\aleph_1$?
\end{problem}

More generally, define
\[
 \theta=\min\{\kappa\geqslant\aleph_0:\neg\mathsf P_\kappa\}.
\]
This cardinal exists by Theorem~\ref{thm:large}, and
$\mathfrak b\leqslant\theta\leqslant\cfrak$.
Failure of $\mathsf P_\kappa$ is preserved on increasing $\kappa$:
extend the functionals by zero, and observe that the summability domain
in the larger space is the product of the old domain with the
additional coordinate subspace.

\begin{problem}\label{prob:threshold}
Determine $\theta$. Must $\theta=\mathfrak b$?
\end{problem}

The following elementary observation explains a limitation of reducing
the index family in Proposition~\ref{prop:construction}. Write
$A\subseteq^*B$ when $A\setminus B$ is finite.

\begin{proposition}\label{prop:coinitial}
Suppose $\mathcal A\subseteq[\N]^\infty$ and every infinite
$B\subseteq\N$ almost contains some member of $\mathcal A$.
Then $|\mathcal A|=\cfrak$.
\end{proposition}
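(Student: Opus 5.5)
The upper bound is immediate, since $\mathcal A\subseteq[\N]^\infty$ and $|[\N]^\infty|=\cfrak$. The real content is the lower bound $|\mathcal A|\geqslant\cfrak$. I would get it from an almost disjoint family of size $\cfrak$ together with a counting argument.

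First I fix an almost disjoint family $\{B_t:t\in 2^{\N}\}\subseteq[\N]^\infty$ of cardinality $\cfrak$, meaning $B_s\cap B_t$ is finite for $s\neq t$. The standard choice identifies $\N$ with the finite binary strings $2^{<\N}$ and lets $B_t$ be the set of initial segments of the branch $t$. Two distinct branches share only finitely many initial segments, so the family is almost disjoint.

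By hypothesis, for each $t$ I choose $A_t\in\mathcal A$ with $A_t\subseteq^* B_t$. Now suppose $A_s=A_t=A$ for some $s\neq t$. Then $A\subseteq^* B_s\cap B_t$, which is finite, so $A$ is finite. This contradicts $A\in[\N]^\infty$. Hence $t\mapsto A_t$ is an injection from a set of size $\cfrak$ into $\mathcal A$, and therefore $|\mathcal A|\geqslant\cfrak$.

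There is no serious obstacle here. The only point that needs care is almost disjointness of the tree branches, and it reduces to this: if $s\neq t$ first differ at coordinate $k$, then every common initial segment has length at most $k$. The choice of $A_t$ uses the axiom of choice only in a harmless way.
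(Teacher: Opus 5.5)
Your proposal is correct and follows essentially the same route as the paper: an almost disjoint family of branches of the binary tree, a choice of $A_t\subseteq^*B_t$, and injectivity of $t\mapsto A_t$ because a common value would be almost contained in the finite set $B_s\cap B_t$. You also state the trivial upper bound $|\mathcal A|\leqslant\cfrak$ explicitly, which the paper leaves implicit.
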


\begin{proof}
There is an almost disjoint family
$\{B_t:t\in2^{\N}\}\subseteq[\N]^\infty$: identify the finite
binary strings with natural numbers and let $B_t$ be the set of
initial segments of $t$. Choose $A_t\in\mathcal A$ with
$A_t\subseteq^*B_t$. If $s\neq t$ and $A_s=A_t$, this common
infinite set would be almost contained in the finite set
$B_s\cap B_t$, a contradiction. Hence $t\mapsto A_t$ is injective.
\end{proof}

Such a family would suffice for the same harmonic obstruction. Indeed,
if $A\subseteq^*B$, enumerate $A\cap B=\{c_1<c_2<\cdots\}$ and set
$d=|A\setminus B|$. The rank of $c_j$ in $A$ is at most $j+d$.
Consequently,
\[
 \left\|\sum_{j=1}^m y_{c_j}^*\right\|\leqslant H_m,
 \qquad
 \sum_{j=1}^m y_{c_j}^*(e_A)\geqslant H_{m+d}-H_d,
\]
where $H_0=0$. The ratio $(H_{m+d}-H_d)/H_m$ tends to one, giving
the same exclusion of the open unit ball centered at $e_A$ from the
summability domain along $B$. Proposition~\ref{prop:coinitial} shows
that this sufficient condition cannot produce a smaller index family.
It does not rule out other constructions at smaller densities.

\section{A further Banach sequence space}\label{sec:sequence-space}

The failure of extraction is not confined to spaces of the form
$\ell_1(\Gamma)$. Consider
\[
 X=\left\{x\in\K^{\N}:\|x\|_X=
 \sup_{\substack{F\subseteq\N\text{ finite}\\F\neq\varnothing}}
 \frac1{\sqrt{|F|}}\sum_{n\in F}|x_n|<\infty\right\}.
\]
This is a Banach norm for the usual weak-$\ell_2$ sequence space.
Completeness follows directly: a norm-Cauchy sequence has a limit in
each coordinate, and passing to the limit in every finite-set estimate
proves convergence in $\|\cdot\|_X$.

\begin{proposition}\label{prop:marcinkiewicz}
The coordinate functionals $(\delta_n)$ form a normalized weakly null
sequence in $X^*$ with no strongly normal subsequence. The space $X$
is not isomorphic to any $\ell_1(\Gamma)$.
\end{proposition}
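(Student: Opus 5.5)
Three things need proof: each $\delta_n$ is a norm-one functional on $X$; $(\delta_n)$ is weakly null in $X^*$; and for every $B=\{b_1<b_2<\cdots\}$ the domain $D_B$ is not dense. We finish with the non-isomorphism claim.

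\emph{Normalization and weak nullity.} Taking $F=\{n\}$ in the definition gives $|x_n|\leqslant\|x\|_X$, and $\|e_n\|_X=1$, so $\|\delta_n\|=1$. For weak nullity, mimic the operator argument of Proposition~\ref{prop:construction}. Let $T:\ell_2\to X$ be the formal inclusion, which is bounded because $\sum_{n\in F}|x_n|\leqslant\sqrt{|F|}\,\|x\|_2$ by Cauchy--Schwarz. Then $T^*:X^*\to\ell_2^*=\ell_2$ satisfies $T^*\delta_n=u_n$, but this points the wrong way for transporting weak nullity. Instead, define $U:\ell_2\to X^*$ by $(Uz)(x)=\sum_n z_n x_n/\sqrt n$. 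Boundedness of $U$ comes from the decreasing rearrangement: $x^*_k\leqslant\|x\|_X/\sqrt k$, hence $\sum_n|z_n x_n|/\sqrt n\leqslant\|x\|_X\sum_k z^*_k/k\leqslant C\|z\|_2\|x\|_X$. But $Uu_n=\delta_n/\sqrt n$, which does not give $\delta_n$ itself. So use a direct argument. By Rainwater's theorem, it suffices to test against extreme points of $B_{X^{**}}$; more simply, view $X^*$ as a space whose $\ell_1$-type norm makes disjoint blocks large. Let $w$ be a weak cluster functional and $\lambda_n\geqslant0$ with $\sum_{n\leqslant N}\lambda_n=1$. Then $\|\sum\lambda_n\delta_n\|_{X^*}=\sup_{\|x\|_X\leqslant1}\sum\lambda_n|x_n|$. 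Choosing $x_n=1/\sqrt n$ shows this is not small for flat averages, so Mazur-type averaging fails. Thus I expect weak nullity to be the main obstacle. My first attempt would be to identify $X^*$ through the Lorentz space $\ell_{2,1}$: the predual is $\ell_{2,1}$, and $X=\ell_{2,\infty}$ is its dual, so $\delta_n=e_n\in\ell_{2,1}\subset X^*$. Weak nullity of $(e_n)$ in $\ell_{2,1}$ then needs checking against all of $X^{**}\supseteq\ell_{2,\infty}^{*}$, including singular functionals. Here I would use that $\ell_{2,1}$ is reflexive? It is not, since $\ell_{2,1}$ contains $\ell_1$-like blocks. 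So the fallback is to show $(e_n)$ has no $\ell_1$ subsequence and then invoke Rosenthal's theorem together with $w^*$-nullity. If this fails, I would at least establish weak$^*$ nullity, which is immediate because $x_n\to0$ is \emph{not} automatic in $\ell_{2,\infty}$ but does hold on the closure of finite sequences.

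\emph{No strongly normal subsequence.} This is the cleanest step. Given $B$, put $v=\sum_j e_{b_j}/\sqrt j$. Then $\|v\|_X\leqslant C$, since for any $F$ the sum is at most $\sum_{k\leqslant|F|}k^{-1/2}\leqslant2\sqrt{|F|}$. Let $S_m=\sum_{j\leqslant m}\delta_{b_j}$, so that $\|S_m\|_{X^*}\leqslant\sqrt m$ by the definition of the norm. Also $S_m(v)=\sum_{j\leqslant m}j^{-1/2}\geqslant2\sqrt m-2$. Exactly as in Proposition~\ref{prop:construction}, for any $x$,
\[
 \sum_{j\leqslant m}|x_{b_j}|\geqslant S_m(v)-\sqrt m\,\|x-v\|_X\geqslant\sqrt m\,(2-\|x-v\|_X)-2,
\]
which tends to infinity whenever $\|x-v\|_X<2$. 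So $\dist(v,D_B)\geqslant2>0$, and $D_B$ is not dense.

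\emph{Non-isomorphism.} Since $\ell_1(\Gamma)$ for countable $\Gamma$ is separable, compare densities first. The space $X$ is nonseparable: the vectors $\sum_{n\in A}e_n/\sqrt{\mathrm{rank}}$ over an almost disjoint family of sets $A$ are uniformly separated. So only uncountable $\Gamma$ are candidates. However, $X=\ell_{2,\infty}$ contains $\ell_2$-like finite-dimensional structure (it has nontrivial type via the Lorentz structure) and does contain $c_0$, which has no copy in $\ell_1(\Gamma)$ by the Schur property. Concretely, find disjoint blocks $x_k$ equivalent to the $c_0$ basis using dyadic flat blocks of rapidly growing length. Since $\ell_1(\Gamma)$ is Schur and $c_0$ is not, this settles the claim.
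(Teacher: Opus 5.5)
Your argument that no subsequence is strongly normal is correct and is essentially the paper's. You use the test vector $\sum_j e_{b_j}/\sqrt j$ of norm $2$ and the growth of the partial sums $S_m$; the paper normalizes with $a_j=\sqrt j-\sqrt{j-1}$ and uses the averages $m^{-1/2}S_m$.

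The genuine gap is weak nullity, which you never prove. Your claim that Mazur-type averaging fails is mistaken. The bound $\|S_m\|\leqslant\sqrt m$, which you use in the very next step, gives $\|N^{-1}\sum_{n\leqslant N}\delta_n\|_{X^*}\leqslant N^{-1/2}$; your test vector $x_n=n^{-1/2}$ only produces about $2N^{-1/2}$.

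The missing step is the signed version of that bound. For distinct $n_1,\dots,n_m$ and unimodular $\varepsilon_j$, taking $F=\{n_1,\dots,n_m\}$ in the definition of $\|\cdot\|_X$ gives $\|\sum_{j\leqslant m}\varepsilon_j\delta_{n_j}\|_{X^*}\leqslant\sqrt m$. Suppose some $\Phi\in X^{**}$ satisfied $|\Phi(\delta_{n_j})|\geqslant\epsilon>0$ along a subsequence. Choosing $\varepsilon_j$ with $\varepsilon_j\Phi(\delta_{n_j})=|\Phi(\delta_{n_j})|$ gives $m\epsilon\leqslant\|\Phi\|\sqrt m$ for all $m$, which is absurd. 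This is exactly the paper's proof.

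Your Rosenthal fallback would not close the gap. It only produces weakly Cauchy subsequences, and a weakly Cauchy, weak$^*$-null sequence in a dual need not be weakly null. For instance, $\sum_{k\geqslant n}e_k$ in $\ell_\infty=\ell_1^*$ is weakly Cauchy and weak$^*$-null, yet any Banach limit takes the value $1$ on every term.

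Your weak$^*$ remark is also wrong. Every $x\in X$ satisfies $x_n\to0$, since $m$ coordinates of modulus at least $\epsilon$ force $\|x\|_X\geqslant\epsilon\sqrt m$. Moreover, convergence only on the closure of the finitely supported vectors would not give weak$^*$ nullity anyway.

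For non-isomorphism, your route is valid but differs from the paper's. Take disjoint sets $F_k$ with $|F_k|=2^k$ and blocks $x_k=2^{-k/2}\sum_{n\in F_k}e_n$. These satisfy $\max_k|c_k|\leqslant\|\sum_k c_k x_k\|_X\leqslant C\max_k|c_k|$: split $\sum_k 2^{-k/2}\min(|F|,2^k)$ at $2^k=|F|$ and sum the two geometric series. Hence $c_0$ embeds in $X$ but not in the Schur space $\ell_1(\Gamma)$. You should write this estimate out rather than assert it.

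The paper's argument is shorter. The bounded inclusion $\ell_2\to X$, which you already noticed, sends the unit vector basis of $\ell_2$ to a normalized weakly null sequence in $X$, so $X$ fails the Schur property. Your nonseparability discussion is unnecessary. Your claim that $X$ has nontrivial type is false, since $X$ contains $c_0$.
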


\begin{proof}
We have $\|\delta_n\|=1$, with equality attained at the $n$th unit
vector. For distinct $n_1,\ldots,n_m$ and scalars $|\varepsilon_j|=1$,
\[
 \left\|\sum_{j=1}^m\varepsilon_j\delta_{n_j}\right\|\leqslant\sqrt m.
\]
If some $F\in X^{**}$ satisfied $|F(\delta_{n_j})|\geqslant\epsilon>0$
on an infinite subsequence, choose the scalars to make
$\varepsilon_jF(\delta_{n_j})$ nonnegative real. Then
$m\epsilon\leqslant\|F\|\sqrt m$ for every $m$, a contradiction.
Thus $(\delta_n)$ is weakly null.

Fix $B=\{b_1<b_2<\cdots\}$ and put
$a_j=\sqrt j-\sqrt{j-1}$. Let $x_B(b_j)=a_j$, with all other
coordinates zero. The numbers $a_j$ decrease and sum to $\sqrt m$
over the first $m$ indices, so $\|x_B\|_X=1$. The functionals
$v_m^*=m^{-1/2}\sum_{j=1}^m\delta_{b_j}$ have norm one and satisfy
$v_m^*(x_B)=1$. If $x\in D_B(\delta)$, then
\[
 |v_m^*(x)|\leqslant m^{-1/2}\sum_{j=1}^{\infty}|x_{b_j}|
 \longrightarrow0.
\]
It follows that $\dist(x_B,D_B(\delta))=1$.

Finally, the natural inclusion $\ell_2\to X$ is bounded by the
Cauchy--Schwarz inequality. Its unit vectors give a normalized weakly
null sequence in $X$, so $X$ fails the Schur property. Every
$\ell_1(\Gamma)$ has the Schur property, completing the proof.
\end{proof}

\end{document}